\theoremstyle{plain}
\newtheorem{theorem}{Theorem}
\newtheorem{lemma}[theorem]{Lemma}
\theoremstyle{definition}
\newtheorem{remark}[theorem]{Remark}
\begin{document}

\author{Johan H{\aa}stad\\
KTH\\
\texttt{johanh@kth.se}
\and
Bj{\"o}rn Martinsson\\
KTH\\
\texttt{bmart@kth.se}
\and
Tamio-Vesa Nakajima\\
University of Oxford\\
\texttt{tamio-vesa.nakajima@cs.ox.ac.uk}
\and
Stanislav \v{Z}ivn\'y\\
University of Oxford\\
\texttt{standa.zivny@cs.ox.ac.uk}
}

\title{A logarithmic approximation of linearly ordered colourings\thanks{This
work was supported by UKRI EP/X024431/1, by a Clarendon Fund Scholarship and
by the Knut and Alice Wallenberg Foundation. For
the purpose of Open Access, the authors have applied a CC BY public copyright
licence to any Author Accepted Manuscript version arising from this submission.
All data is provided in full in the results section of this paper.}}

\date{}
\maketitle

\begin{abstract}
    A linearly ordered (LO) $k$-colouring of a hypergraph 
    assigns to each vertex a colour from the set $\{0,1,\ldots,k-1\}$ in such a way that
    each hyperedge has a unique maximum element. Barto, Batistelli, and Berg
    conjectured that it is \NP-hard to find an LO $k$-colouring of an LO
    2-colourable 3-uniform hypergraph for any constant $k\geq 2$~[STACS'21] but even the case
    $k=3$ is still open. Nakajima and \v{Z}ivn\'{y} gave 
    polynomial-time algorithms for finding, given an LO 2-colourable 3-uniform
    hypergraph, an LO colouring with $O^*(\sqrt{n})$ colours~[ICALP'22] and an
    LO colouring with $O^*(\sqrt[3]{n})$ colours~[ACM ToCT'23]. Very recently,
    Louis, Newman, and Ray gave an SDP-based algorithm with $O^*(\sqrt[5]{n})$
    colours~[FSTTCS'24]. We present two
    simple polynomial-time algorithms that find an LO colouring with
    $O(\log_2(n))$ colours, which is an exponential improvement.
\end{abstract}

\section{Introduction}

Given a graph $G$, the \emph{graph $k$-colouring} problem asks
to find a colouring of the vertices of $G$ by colours from the set
$\{0,1,\ldots,k-1\}$ in such a way that no edge is monochromatic. 
The \emph{approximate graph colouring problem} asks, given a $k$-colourable
graph $G$, to find an $\ell$-colouring of $G$, where $\ell\geq k$. For $k=3$, the
state-of-the-art results are \NP-hardness of the case $\ell=5$~\cite{BBKO21} and
a polynomial-time algorithm for finding a colouring with $\ell=O(n^{0.19747})$ colours,
where $n$ is the number of vertices of the input graph $G$~\cite{KTY24:stoc}.
For non-monochromatic colourings of hypergraphs, it is known that finding an
$\ell$-colouring of a $k$-colourable $r$-uniform hypergraph is \NP-hard for any
constant $\ell\geq k\geq 2$ and $r\geq 3$~\cite{DRS05}, 
and also some positive results are known for colourings with
super-constantly many colours,
e.g.~\cite{Krivelevich03:ja,Krivelevich01:ja,Chlamtac08:approx}.

A new promise hypergraph colouring problem was identified
in~\cite{Barto21:stacs}.
Given a 3-uniform hypergraph $H$, a colouring of the
vertices of $H$ with colours from the set $\{0,1,\ldots,k-1\}$ is called
a \emph{linearly ordered} (LO) $k$-colouring if every edge $e$ of $H$ satisfies the following: if two vertices of $e$ have the same colour then the third colour is
larger. More generally, a colouring of a hypergraph $H$ is an LO
colouring if every edge of $H$ has a unique maximum colour. (Note that the
two definitions coincide for 3-uniform hypergraphs.)
Barto et al. conjectured that finding an LO
$\ell$-colouring of a 3-uniform hypergraph that admits an LO $k$-colouring is
\NP-hard for every constant $\ell\geq k\geq 2$~\cite{Barto21:stacs} but even the
case $k=2$ and $\ell=3$ is open. Nakajima and \v{Z}ivn\'y established
\NP-hardness for some regimes of the parameters $k,\ell,
r$~\cite{NZ22:icalp,NZ23:toct}  (where $r$ is the uniformity of the input hypergraph) and, very recently, Filakovk\'y et al.~\cite{fnotw24:stacs} showed \NP-hardness of
the case $k=3$, $\ell=4$, $r=3$.
More importantly for this paper, Nakajima and \v{Z}ivn\'y also considered
finding an LO $f(n)$-colouring of an LO 2-colourable 3-uniform hypergraph
with $n$ vertices and presented polynomial-time algorithms with
$f(n)=O(\sqrt{n \log \log n}/\log n)$~\cite{NZ22:icalp}
and
$f(n)=O(\sqrt[3]{n \log \log n / \log n})$~\cite{NZ23:toct}.
Very recently, Louis, Newman, and Ray~\cite{LNR24:arxiv} have given a
polynomial-time SDP-based algorithm with $f(n)=O^*(\sqrt[5]{n})$ colours.

As our main result, we improve their results by an exponential factor. 

\begin{theorem}\label{thm:main}
    There is an algorithm which, if given a 3-uniform hypergraph $H$ with $n \geq 4$ vertices
    and $m$ edges that admits an LO 2-colouring, finds an LO
    $\log_2(n)$-colouring of $H$ in time $O(n^3 + nm)$.
\end{theorem}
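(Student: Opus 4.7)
My plan is to design a recursive algorithm that builds the LO colouring top-down. At each level, the algorithm identifies a set $I \subseteq V$ of vertices to receive the current top colour and recurses on $V \setminus I$ with one fewer colour available. For LO, $I$ must be a \emph{strong independent set}: no hyperedge contains two or more vertices of $I$. This guarantees that each hyperedge intersecting $I$ has its unique maximum in $I$, while hyperedges disjoint from $I$ are handled recursively on $V \setminus I$, which remains LO 2-colourable by inheritance.

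The key combinatorial claim I would try to establish is the following. For every LO 2-colourable 3-uniform hypergraph on $n$ vertices with at least one hyperedge, one can find in polynomial time a strong independent set $I$ such that either~(a) $|I| \geq \lceil n/2 \rceil$, so the recursion halves the vertex set, or~(b) removing $I$ and all hyperedges it touches leaves no hyperedges, so one further colour completes the colouring. Existence of such an $I$ is immediate from the tip set $S$ of any LO 2-colouring: since every hyperedge has exactly one vertex of $S$, the set $S$ is itself strongly independent, and either $|S| \geq n/2$ (giving~(a) with $I = S$) or $|S| < n/2$ (in which case removing $S$ eliminates every hyperedge, giving~(b) with $I = S$). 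Together with the trivial base case of a hypergraph on at most two vertices, this would produce an LO colouring with at most $\lceil \log_2 n \rceil$ colours.

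The hard part will be that identifying the tip set $S$ is NP-hard, so the algorithm must find some $I$ achieving~(a) or~(b) without knowing $S$. My approach would be a greedy construction that iteratively adds vertices according to some structural priority. One useful criterion is whether a vertex's ``link graph'' (the graph on its hyperedge-neighbours whose edges come from pairs co-occurring with it in a hyperedge) is non-bipartite: for any base vertex $v$ in an LO 2-colouring, the link of $v$ is bipartite between tips and other bases, so a non-bipartite link forces $v$ to be a tip in every LO 2-colouring, allowing it to be safely added to $I$. The delicate step is to argue that such a greedy procedure always returns an $I$ achieving one of~(a) or~(b); this is where the full combinatorial content of LO 2-colourability must be used, and where I expect the bulk of the proof to lie. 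The runtime target $O(n^3 + nm)$ accommodates preprocessing of co-occurrence and link information, followed by $O(\log n)$ greedy iterations, each taking polynomial time.
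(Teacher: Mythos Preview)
Your proposal has a genuine gap: you never supply an algorithm for finding the set $I$, and the greedy link-graph heuristic you suggest cannot fill it. The bipartite-link criterion only detects vertices that are tips in \emph{every} LO $2$-colouring; in an instance where every link is bipartite (for example, a union of vertex-disjoint triples), no vertex is forced, the greedy procedure returns nothing, and you have no fallback. More fundamentally, the dichotomy you aim for is not attainable in polynomial time unless $\mathrm{P}=\mathrm{NP}$. Take any hard satisfiable $1$-in-$3$~SAT instance on $n$ variables and pad it with $10n$ fresh vertices arranged into $10n/3$ vertex-disjoint hyperedges. Every strong independent set contains at most one vertex from each new triple and at most $n$ old vertices, so its size is at most $13n/3 < 11n/2$; hence option~(a) is impossible for \emph{any} strong independent set in this instance. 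You are therefore forced into option~(b), but a strong independent set that meets every hyperedge is exactly an LO $2$-colouring, and on the old part this means solving the original $1$-in-$3$~SAT instance.

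The paper's proof takes a different route that avoids this obstruction. It works \emph{bottom-up}: at each step it assigns the current smallest colour to a set $T$ that meets every $3$-edge in zero or \emph{two} vertices (not zero or one), so the single remaining vertex of each touched edge later receives a strictly larger colour and becomes the unique maximum. The key algorithmic idea, which your proposal is missing, is linear algebra over $\mathbb{F}_2$: any solution $v$ of $Av=1^m$ has an odd number of ones in each edge, so the zero set of $v$ automatically has the required $0$-or-$2$ intersection pattern; a derandomised random choice over the affine solution space then guarantees $|T|\ge n/2$. This is what gives both correctness and the halving in one stroke, and it is the step your combinatorial greedy strategy cannot reproduce.
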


In fact we present two different algorithms that return colourings using $O(\log n)$ colours.  Both 
are based on solving the natural system of linear equations implied by the existence of an LO 2-colouring.  In one case, the system is solved modulo 2, and in the other case, the system is solved over the rational numbers.

While the $H$ which we are given as input is 3-uniform, we will need the notion we define next in greater generality; hence we define it for general hypergraphs. For each edge $\{x_1, \ldots, x_r\}$ of $H$, we write an equation $v_{x_1} + \cdots + v_{x_r} = 1$ where we initially use equality modulo 2 but as stated above we later use the same
system over the rational numbers.   Let $A$ be this set of equations, written as a matrix with $m$
    rows and $n$ columns. (Note that $A$ is the \emph{incidence matrix} of $H$.) Thus $v$ is a solution if and only if
    $Av=1^m$.  Clearly a valid LO 2-colouring gives one solution but in the general case, the system 
    has a large dimensional affine space as its set of solutions and the 
    desired solution is hard to find.

    This is the journal version of the conference paper~\cite{HMNZ24:approx}. The main change is that we have a new method for using the solution over the rational numbers. This improvement reduces the number of colours used by the algorithm in \Cref{sec:algoQ} from $2 \log_2 n$ to $1.5 \log_2 n$ colours.

\paragraph{Related work.} While the notion of LO colouring was first identified in the context of promise problems in~\cite{Barto21:stacs}, it is identical to the notion of \emph{unique-maximum colouring}~\cite{Cheilaris13:sidma} of a hypergraph, and similar to that of \emph{conflict-free colouring}, introduced by Even et al.~\cite{Even02} and Smorodinsky~\cite{smorodinskythesis}. (In a conflict-free colouring, every hyperedge must contain a unique value, but this value need not be the maximum). There is also the related notion of a \emph{graph unique-maximum colouring}, also known as \emph{ordered colouring} or \emph{vertex ranking}~\cite{Cheilaris11:jda}, which is the same as a unique-maximum colouring of the path hypergraph of a graph (i.e.~the hypergraph whose edges are paths of the graph). We refer the reader to~\cite{Smorodinsky2013} for more on conflict-free and unique-maximum colouring.

It is worth mentioning Smorodinsky's framework for unique-maximum colouring of a hypergraph (see \cite{Smorodinsky07} where it is proposed for conflict-free colouring, and also~\cite{Smorodinsky2013} where it is extended to unique-maximum colouring), which does the following: Given a hypergraph $H$, find a non-monochromatic colouring with few colours, select the largest cardinality colour class, colour it in our unique-maximum colouring with the minimum colour, then continue recursively.  Smorodinsky uses this algorithm to find unique-maximum colourings of graphs where it is \emph{hereditarily} easy to find non-monochromatic colourings with few colours --- in particular graphs that come from geometric situations. Unfortunately in our case this does not happen. Consider for example the hypergraph $H$ with vertices $v_i$ for $i \in [k]$ and $w_{ij}$ for $i, j \in [k]$; and edges $(v_i, v_j, w_{ij})$. Suppose we apply Smorodinsky's algorithm to this hypergraph, and at the first step we find the non-monochromatic colouring given by $v_i \mapsto 0$ and $w_{ij} \mapsto 1$. Then at the next step Smorodinsky's algorithm must colour the clique on $v_1, \ldots, v_k$ --- this basically means that it outputs a colouring with $\Theta(\sqrt{n})$ colours, since the number of vertices is $\Theta(k^2)$.\footnote{Observe that even the linearisation trick from~\cite{NZ23:toct} does nothing for this hypergraph, as it is already linear --- i.e.~every two edges intersect in at most one vertex. The essence of the trick is to identify vertices $x$ and $y$ if there exist vertices $a, b$ and edges $(x, a, b)$ and $(y, a, b)$, as such vertices must have the same colour in any LO 2-colouring.} Thus even if we ignore the fact that finding the colouring on the graph that we get after the first step is \NP-hard in general (note that we could encode finding a colouring of any graph we wanted rather than just the clique), we do not necessarily get a small number of colours by this framework. The algorithm for conflict-free colourings presented in~\cite{Even02} is similar. In each step, it chooses a set of vertices that intersects each edge $e$ either (i) in one vertex, or (ii) in $< |e|$ vertices. (These two cases are disjoint only when $|e| = 1$.) It then colours these vertices with one colour, throwing away all edges coloured by exactly one vertex. To generalise this to unique-maximum colourings one would need to keep all edges except those for which exactly one vertex remains --- thus this algorithm does not work in our setting for the same reason.

While these algorithms have some similarity with ours, we critically do not find a non-monochromatic colouring of our hypergraph $H$ at each step. Indeed, what we do at any particular step either immediately solves an edge or leaves it completely intact --- this lets us keep the property of LO 2-colourability for what is left to colour intact.

Let us return to the world of promise problems. Our problem is a \emph{promise CSP}~\cite{BBKO21} of the form ``given a 3-uniform LO 2-colourable hypergraph, find a homomorphism to a fixed 3-uniform hypergraph $H$''. 
In~\cite{Barto21:stacs}, the computational complexity of this problem was classified for all 3-vertex hypergraphs $H$ except for the $H$ that encodes LO 3-colouring. This gap in their findings is what motivated the authors to introduce the linearly ordered colouring problem. In~\cite{CKKNZ25:TOCL}, the authors classified the problem for any $H$ whose edges do not contain 3 distinct elements; i.e.~all edges are of the form $(x, x, y)$. In particular, this implies that the \emph{rainbow-free}\footnote{In the \emph{rainbow-free} variant, the goal is to find a colouring without rainbow edges, i.e.~no edge can contain 3 distinct colours.} LO 2- vs.~LO $k$-colouring problem is \NP-hard for every fixed $k$. However, note that the case of $k=3$ was already shown to be NP-hard by \cite{Barto21:stacs}. 

\section{Algorithm based on equations modulo 2}\label{sec:algomod2}

In this section all linear equations are taken modulo 2. For the following, given a set $S$ of positive integers, an $S$-uniform hypergraph is a hypergraph where all edges have sizes taken from $S$. We first prove the following subprocedure of the main algorithm.

\begin{lemma}\label{lem:innerstep}
    There is an algorithm which, if given a $\{2, 3\}$-uniform hypergraph $H$ with $n$ vertices and
    $m$ edges that admits an LO 2-colouring and such that the implied linear
    system of equations $Av=1^m$ does not fix the value of any variable, outputs a subset $T$ of vertices that intersects edges of size three in zero or two vertices and edges of size two in exactly one vertex.
    Moreover, we have  $\vert T\vert \geq n / 2$. The algorithm runs in $O(n^3 + nm)$ time.
\end{lemma}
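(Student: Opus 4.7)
The plan is to put subsets $T \subseteq V(H)$ satisfying the lemma's intersection conditions in bijection with solutions $v \in \mathbb{F}_2^n$ of the linear system $Av = 1^m$, via $T = \{j : v_j = 0\}$. Indeed, for each $2$-edge $\{x, y\}$, the equation $v_x + v_y \equiv 1 \pmod 2$ is equivalent to $|T \cap \{x, y\}| = 1$; and for each $3$-edge $\{x, y, z\}$, the equation $v_x + v_y + v_z \equiv 1 \pmod 2$ is equivalent to $|T \cap \{x, y, z\}|$ being even, hence $0$ or $2$. So it suffices to find a solution $v$ of $Av = 1^m$ with $|\{j : v_j = 0\}| \geq n/2$.

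First I would establish existence by a probabilistic argument. The solution set $S$ is a nonempty affine subspace of $\mathbb{F}_2^n$ (nonemptiness follows from the existence of an LO 2-colouring). The hypothesis that no coordinate is fixed by the equations means that for every $j$ the $\mathbb{F}_2$-affine functional $v \mapsto v_j$ is non-constant on $S$, so for a uniformly random $v \in S$ one has $\Pr[v_j = 0] = \tfrac{1}{2}$ exactly. By linearity of expectation, $\mathbb{E}[|T|] = n/2$, and hence some solution attains $|T| \geq n/2$.

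Next I would derandomise this by the method of conditional expectations. Using Gaussian elimination over $\mathbb{F}_2$, compute a particular solution $v^{(0)}$ and a basis $z_1, \ldots, z_k$ of $\ker A$; then process the basis sequentially, at step $i$ picking $\epsilon_i \in \{0, 1\}$ so that the conditional expectation of $|T|$ given $\epsilon_1, \ldots, \epsilon_i$ stays at least $n/2$. That expectation equals the number of coordinates already forced to $0$, plus one half of the number of coordinates still dependent on some $z_l$ with $l > i$, so it is maintainable with $O(n)$ work per step and $O(n^2)$ work in total after the elimination.

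The main obstacle will be fitting everything into the claimed $O(n^3 + nm)$ time budget: textbook Gaussian elimination on the $m \times n$ matrix $A$ costs $\Theta(mn^2)$, which exceeds the target whenever $m \gg n$. The natural remedy is to first reduce to an equivalent system on at most $n$ rows by a single pass through the equations, and only then perform Gaussian elimination on the resulting $O(n) \times n$ system in $O(n^3)$ time; making this extraction step run in $O(nm)$ despite the fact that reductions against a growing basis tend to densify rows is the subtle point of the runtime analysis.
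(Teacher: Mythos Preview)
Your proposal is correct and matches the paper's proof essentially line for line: the bijection $T=\{j:v_j=0\}$, the observation that non-fixedness makes each coordinate uniform on a random solution, and derandomisation by conditional expectations over the coefficients of a kernel basis are exactly what the paper does. The runtime subtlety you flag is real; the paper dispatches it in one clause (``since every equation has $O(1)$ entries''), and your plan of extracting $\le n$ independent rows in $O(nm)$ before doing $O(n^3)$ elimination is the intended reading---the point being that a new sparse row can be reduced against a maintained reduced row-echelon basis with only $O(1)$ XORs of length $n$, while the $O(n^2)$ cost of inserting a new pivot is incurred at most $n$ times.
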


\begin{proof}

    We first describe a randomised version of our algorithm, and then derandomise
    it. The set of solutions to $Av=1^m$ is an affine space and hence a generic solution can be written as $v=v^0+\sum_{i=1}^r a_i v^i$ for a basic solution $v^0$, linearly independent solutions to the homogeneous system $v^i$, and field elements (in this case bits) $a_i$.
    The fact that no variable is fixed implies that for each vertex $x$ there is 
    some positive $i$ such that $v^i_x=1$.
    
     For the randomised algorithm choose $a_1, \ldots, a_r$ to be independent identically distributed uniformly random bits, and set $T$ to be 
     the set of variables $x$, such that $v_x=0$.  Clearly $T$ satisfies
     the conclusion of the lemma as in each edge we have an odd number of ones. 
     For every vertex $x$, there exists positive $i$ such that $v_x^i = 1$ --- hence, due to the influence of $a_i v_x^i$, we see that $x$ is included in $T$ with probability $\frac{1}{2}$.
     Thus, on average $T$ contains half the vertices.

    Now, we derandomise this algorithm using the method of conditional
    expectations. Go through the variables $a_i$ in increasing order and 
    fix its value once and for all.  Fixing the value of $a_i$ determines
    the value(s) of some $v_x$ while other values remain undetermined.  For each
    value being determined $v_x^i=1$ and hence one value of $a_i$ gives
    the final value 0 and the other gives final value 1.  Set $a_i$ such
    that at least half the determined values are 0.  After we have 
    fixed all $a_i$ this way, we have a final solution with at least
    $n/2$ zeroes.
    
    The bottleneck of the running time of this algorithm is solving the linear system of equations. This can be done in the advertised running time since every equation has $O(1)$ entries. 
\end{proof}

\begin{proof}[Proof of \Cref{thm:main}]
As a preliminary step, we eliminate any variable determined by the system $Av=1^m$. Note that if the colour of a vertex is determined by the system $Av=1^m$, then this vertex must have that same colour in \emph{all} LO 2-colourings. Fix these variables once and for all and eliminate them from the equation system.
For all vertices that have been given the colour $1$, we set the colours of the two other vertices in all of its edges to be $0$. This process of identifying fixed variables and eliminating them is then repeated until the system $Av=1^m$ contains no variables fixed to a constant. At any fixed point of this process, for every edge, either all vertices in that edge are fixed (and the edge has a unique-maximum as required), or exactly one vertex in it is fixed to 0. 

Now, remove all coloured vertices from the hypergraph $H$, shrinking the edges they belonged to. The remaining hypergraph will no longer be $3$-uniform (it will be $\{2, 3\}$-uniform though), but importantly it will still be LO 2-colourable. Our goal is still to LO colour the remaining hypergraph, since any edge partially coloured by the preliminary step above must have had exactly one vertex $v$ fixed to 0; and hence, if we LO colour the edge that resulted from removing $v$, this leads to an LO colouring of the original hypergraph when $v$ is assigned 0.

Consider the following algorithm, where $i$ starts at $0$.
\begin{enumerate}
    \item If the hypergraph $H$ has at most, say, 20  vertices, find an LO colouring of $H$ by brute force using colours $i$ and $i + 1$. (It exists since $H$ is LO 2-colourable.)
    \item Otherwise, find the subset $T$ guaranteed by \Cref{lem:innerstep}.
    \item Colour the vertices in $T$ by colour $i$. Remove the vertices in $T$ from $H$. Remove all edges that intersect $T$ from $H$. Increment $i$ by $1$.
    \item Repeat.
\end{enumerate}
Note that $| T| \geq n / 2$ and thus within $-4 +  \log_2 n$ repetitions we reach the first case. Each step adds one colour and we get two additional
colours from the final brute-force colouring for a total of at most $\log_2 n$ colours.
The output is correct as the first time some vertex in an edge is coloured, for edges with three vertices exactly one more vertex in the same  edge is coloured at the same time, and for edges with two vertices only that vertex is coloured at that time. The remaining vertex is given a higher colour and hence the edge is correctly coloured.

For the time complexity, we again note that it is dominated by the time needed to solve the linear system of equations. Since $n$ decreases by a factor of 2 at every step, and the cost of the inner loop is $O(n^3 + nm)$, this gives us the required time complexity --- even ignoring the fact that $m$ also is decreased.
\end{proof}

Note that the number 20 selected above can be increased to any number that is $O(\log n)$ and the algorithm remains polynomial time (since we must compute the colouring for a subgraph of this size by brute force). If we stop the algorithm at $B$ vertices, then we save $\log B + \Theta(1)$ colours, since this is how many colours the algorithm would have used to colour the last $B$ vertices. By setting $B = \Theta(\log n)$, we can thus save $\Theta(1) + \log \log n$ colours while keeping run time of the algorithm polynomial in $n$.

A slight variant can be obtained by instead counting the number of remaining edges with no coloured vertex. Once we have no more such vertices, we colour all remaining vertices with the next colour. For such edge, a random solution $v$ gives the four sets of
values $(0,0,1)$, $(0,1,0)$, $(1,0,0)$ and $(1,1,1)$ with equal probabilities. 
Thus the number of edges decreases, on average, by a factor 4 for each
iteration. (Note that all the edges of size 2 are solved in the first iteration, so there is no need to count them.)
It is easy to achieve this deterministically by conditional
expectations.  Once we have not edge with three uncoloured vertices we can colour all remaining uncoloured vertices with the next colour. We state the conclusion as a theorem.

\begin{theorem}\label{thm:main2}
    There is an algorithm which, if given a 3-uniform hypergraph $H$ with $n$ vertices
    and $m \geq 1$ edges that admits an LO 2-colouring, finds an LO
    $(2 + \frac 12 \log_2(m))$-colouring of $H$ in time $O(n^3 + nm)$.
\end{theorem}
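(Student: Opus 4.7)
The plan is to follow the algorithm and analysis of \Cref{thm:main} essentially verbatim and swap only the potential function that drives the derandomisation: instead of making the colour-$i$ set $T$ as large as possible, the algorithm will minimise the number of \emph{surviving} $3$-edges, i.e.~edges still with no coloured vertex. Because every iteration colours $0$ or $2$ vertices of any given $3$-edge (each solution to $Av=1^m$ has an odd number of $1$-coordinates, hence an even number of $0$-coordinates, on every edge equation), once there are no surviving $3$-edges every edge has at most one uncoloured vertex, so assigning a single extra colour to every remaining uncoloured vertex makes each such vertex the unique maximum on its edge and completes the LO colouring.

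The probabilistic heart of the argument will be a short count in $\mathbb{F}_2^3$. On a surviving $3$-edge $\{x_1,x_2,x_3\}$ the solution $v$ restricted to the edge takes one of the four tuples $(0,0,1), (0,1,0), (1,0,0), (1,1,1)$; the first three each colour two vertices of the edge and hence solve it, while $(1,1,1)$ colours none. Under uniformly random $a_1,\dots,a_r$ these four tuples are equidistributed: if the projection of the affine solution space onto $\{x_1,x_2,x_3\}$ had dimension less than $2$, then (since any two of the four tuples agree on some coordinate) some $v_{x_i}$ would be fixed, contradicting the no-fixed-variable invariant that preprocessing establishes and that is preserved across iterations --- removing coloured vertices and solved edges only drops equations, and so can only enlarge the projection of the solution space onto any surviving edge. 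Hence each surviving edge survives with probability exactly $1/4$, so the expected number of survivors drops from $m$ to $m/4$ per iteration.

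Derandomising by conditional expectations on the number of surviving $3$-edges --- the exact analogue of the derandomisation in \Cref{lem:innerstep}, but with a new target potential --- will make this bound deterministic. After $K$ iterations the number of surviving $3$-edges is at most $\lfloor m/4^K \rfloor$, which drops to $0$ as soon as $K > \tfrac{1}{2}\log_2 m$, so $K = \lfloor \tfrac{1}{2}\log_2 m \rfloor + 1$ iterations suffice. Each iteration introduces one new colour and the final catch-all step introduces one more; the colours used in preprocessing can be identified with existing colours (colour $0$ for the fixed-to-$0$ vertices, the top colour for the fixed-to-$1$ vertices) exactly as in \Cref{thm:main}, giving at most $K+1 \leq 2 + \tfrac{1}{2}\log_2 m$ colours in total. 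The running-time bound $O(n^3 + nm)$ follows by the same geometric-sum argument as in \Cref{thm:main}, the per-iteration cost still being dominated by the linear-system solve.

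The single new conceptual point --- and the main thing I expect to have to write up carefully --- is the preservation of the no-fixed-variable invariant from one iteration to the next, which is what makes the $1/4$ survival probability rigorous rather than heuristic. This will boil down to the monotonicity observation already sketched above: dropping equations and variables from the system can only add solutions, and hence cannot fix a variable that was free before. Everything else is either a direct transliteration from the proof of \Cref{thm:main} or a one-line computation.
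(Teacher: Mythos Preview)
Your proposal is correct and follows exactly the sketch the paper gives in the paragraph immediately preceding the theorem: switch the derandomisation target from $|T|$ to the number of surviving $3$-edges, note that each such edge survives a random solution with probability $1/4$, derandomise by conditional expectations, and finish with one catch-all colour once no $3$-edges survive. You actually supply more justification than the paper does --- in particular the dimension argument for why the four tuples are equiprobable and the preservation of the no-fixed-variable invariant across iterations --- but the approach is the same.

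One small point to be careful about in the write-up: the geometric-sum running-time argument in \Cref{thm:main} relies on $n$ halving at every step, which need not happen here since you are no longer maximising $|T|$; if you re-solve the linear system each iteration, the $n^3$ term can pick up an extra $\log m$ factor. The easy fix --- already implicit in your invariant-preservation remark --- is to solve the system once after preprocessing and reuse the same basis throughout; the per-iteration conditional-expectations work then costs $O(n\,m_i)$, and since $m_i \le m/4^i$ the geometric sum gives $O(nm)$ total on top of the one-time $O(n^3+nm)$ solve.
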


\begin{remark}
  Our algorithm has some similarity with algorithms for
  \emph{temporal} CSPs~\cite{BK10:jacm}. Note that a rainbow-free LO $\omega$-colouring
  (which means an LO colouring, but with no restriction on the number of
  colours; also, \emph{rainbow tuples} i.e.~tuples $(x, y, z)$ with $x \neq y \neq z \neq x$ are disallowed) is a temporal CSP; to solve it, one finds a subset that could be the
  smallest colour (by solving mod-2 equations as above), sets that colour, then
  continues recursively. The difference is that for a rainbow-free LO $\omega$-colouring one
  does not care about the number of colours, so one can find any nonempty set of
  vertices to set the lowest colour to, whereas in our problem we are trying to
  find a large set of this kind. We note that the algorithm of~\cite{NZ23:toct} also uses this approach when setting ``small colours''.
\end{remark}

\begin{remark}
 We remark that the subprocedure of our algorithm computes the exclusive or
of two vectors of bits. Thus the algorithm runs very fast in practice ---
  on most architectures hundreds of operations of this kind are done at one time by (i) packing the bits within a larger word and (ii) using SIMD instructions.
\end{remark}

\section{\texorpdfstring{Algorithm using $\mathbb{Q}$}{Algorithm using Q}}
\label{sec:algoQ}

In this section we present a more complicated algorithm
which uses more colours.  This
might seem pointless, and indeed it might be.  On the
other hand the ideas used are slightly different and hence there
might be situations where the ideas of this section can
turn out to be useful.  It is also curious to see that we can use the same system
of linear equations, now over the rational numbers, in a
rather different way.  The algorithm here is in fact essentially
saying that we can always use the unbalanced case of~\cite{LNR24:arxiv}.   As this eliminates many complications
and in particular makes it possible to completely avoid
any semi-definite programming, we state all facts needed in
the current section rather than refer to the very similar
statements in ~\cite{LNR24:arxiv}.
As already stated, all arithmetic in this section is over
the rational numbers.  In this situation, no variables can
be determined as we can set $v^0$ to
have all coordinates equal to $1/3$.  

We study the homogeneous system $Av=0^m$ and
by the assumption of LO 2-colourability it has
a solution, $w$, with coordinates either $-\frac 13$ or $\frac 23$.
Let us first show how
solutions over the rational numbers can be used to find LO colourings.
This is the same lemma used in the unbalanced case of ~\cite{LNR24:arxiv}.

\begin{lemma}\label{lemma:unbal}
Suppose we have a solution, $u$, to the homogeneous system where $M$
is the maximal value of the absolute value of a coordinate
and $m > 0$ is the minimal absolute value.  Then, we can
LO colour with $2+\log_2{(M/m)}$ colours.
\end{lemma}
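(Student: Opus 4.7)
The plan is to construct an explicit colouring $c\colon V\to\mathbb{Z}$ using at most $2+\log_2(M/m)$ distinct colour values, and then to verify that in every edge the vertex of strictly largest absolute value receives the strictly greatest colour.

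I would begin with the following structural observation, which drives the entire argument: in any edge $\{a,b,c\}$ with $u_a+u_b+u_c=0$ and all three coordinates non-zero, there is a unique vertex---call it $a$---with strictly largest $|u|$; moreover $u_a$ has sign opposite to both $u_b$ and $u_c$, and $|u_a|=|u_b|+|u_c|$. (Both facts follow from the triangle inequality applied to $u_a=-(u_b+u_c)$.) In particular, assuming $|u_b|\ge|u_c|$, we get the two-sided estimate $|u_b|+m \le |u_a|\le 2|u_b|$; the lower bound comes from $|u_c|\ge m$, and the upper bound from $|u_c|\le|u_b|$.

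Given this, I would take the colouring to be a dyadic level $\lceil\log_2(|u_x|/m)\rceil$ augmented by an additive offset depending on $\operatorname{sgn}(u_x)$, for instance $c(x)=\lceil\log_2(|u_x|/m)\rceil+[u_x>0]$, which takes values in $\{0,1,\dots,\lceil\log_2(M/m)\rceil+1\}$ and so uses at most $2+\log_2(M/m)$ colours. The LO property is then checked by a case analysis on the sign of the max-absolute-value vertex $a$. The case $u_a>0$ is immediate: the sign offset lands on $a$, and $|u_a|>|u_b|$ already forces $\lceil\log_2(|u_a|/m)\rceil\ge\lceil\log_2(|u_b|/m)\rceil$, so $c(a)\ge c(b)+1$, and similarly $c(a)\ge c(c)+1$.

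The main obstacle I anticipate is the opposite case $u_a<0$: now the sign offset goes to $b$ and $c$ rather than to $a$, so a direct argument would need the dyadic level of $|u_a|$ to exceed that of $|u_b|$ by at least two, which the ratio bound $|u_a|\le 2|u_b|$ alone does not provide. Overcoming this requires genuine use of the stronger additive inequality $|u_a|\ge|u_b|+m$. I expect the proof to handle this either by replacing $\lceil\log_2(|u_x|/m)\rceil$ with a slightly shifted variant such as $\lceil\log_2((|u_x|+m)/m)\rceil$ whose differences are controlled by the additive (rather than multiplicative) gap between $|u_a|$ and $|u_b|$, or else by recasting the colouring as an iterative peeling procedure which spends one colour to remove the vertices of top dyadic magnitude $|u_x|>M/2$ (no edge having all three vertices in this range, since $|u_a|\le M$) and then recurses on the remainder with $M/m$ halved. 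In either variant, once both sign cases of the edge analysis go through with a single rule, the stated bound on the number of colours is immediate from the range of $c$.
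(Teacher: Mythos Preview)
Your diagnosis of the obstacle is accurate, but neither of the two proposed fixes closes the gap. Consider the edge with $(u_a,u_b,u_c)=(-2m,m,m)$. Under the rule $c(x)=\lceil\log_2(|u_x|/m)\rceil+[u_x>0]$ all three vertices receive colour $1$; under the shifted rule $c(x)=\lceil\log_2((|u_x|+m)/m)\rceil+[u_x>0]$ all three receive colour $2$; and in the peeling variant, the first round removes only $a$ (the unique vertex with $|u|>M/2=m$), leaving the size-$2$ edge $\{b,c\}$ whose two vertices have identical $|u|=m$ and therefore collide again in the next round. The additive inequality $|u_a|\ge|u_b|+m$ is too weak to force a gap of two dyadic levels, and once edges are shrunk the peeling no longer sees a homogeneous system, so the recursion hypothesis does not carry over.

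The paper sidesteps the whole difficulty by building the sign into the dyadic scale rather than adding it as an offset afterwards. Normalising $M=1$ and (for convenience) asking for a unique \emph{minimum}, it assigns the even colour $2\ell$ to positive coordinates in $(2^{-(2\ell+1)},\,2^{-(2\ell-1)}]$ and the odd colour $2\ell+1$ to negative coordinates with absolute value in $(2^{-(2\ell+2)},\,2^{-2\ell}]$. Every colour class is thus sign-homogeneous, so if two vertices of an edge share a colour they share a sign; their sum then has absolute value exceeding twice the lower threshold of that band, which pushes the third vertex (of opposite sign, by your own structural observation) into a strictly smaller colour. The verification is a single line per parity and never needs to treat the cases $u_a>0$ and $u_a<0$ separately. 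In short, the right move is not ``dyadic level plus a sign correction'' but ``one interleaved scale whose parity \emph{is} the sign''.
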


\begin{proof} For notational convenience let us instead require that the
minimal colour in each edge should be unique. We can simply reverse
the order of the colours at the end.  By scaling we can assume $M=1$.
We use even colours for positive coordinates and we
give $x$ the colour $2\ell$ if $v_x$ is at most $2^{-(2\ell-1)}$
and strictly larger than $2^{-(2\ell+1)}$.
For negative coordinates we use $2\ell+1$ as
the colour if $v_x$ is between $-2^{-2\ell}$ (inclusive)
and $-2^{-(2\ell+2)}$ (non-inclusive).
Let us verify that this gives a correct
colouring.

Take an edge $(x,y,z)$ and suppose both $x$ and $y$
get the same colour $2\ell$.   Then by the linear equation of the edge $v_z < -2^{-2\ell}$ and thus $z$ has a colour below $2\ell$.  The case
of two vertices of odd colour is similar and as the bound on the number of colours is immediate, the lemma follows.
\end{proof}

To find a solution which to apply \Cref{lemma:unbal} we first find a set of solutions $\{v^i\}_{i=1}^{n}$ to homogeneous system $Av=0^m$.  We require that the $i$th coordinate of $v^i$, i.e. $v^i_i$, equals $1/2$ and the maximum absolute value of any coordinate of $v^i$ is at most $1$. As $-3w/2$ or $3w/4$ satisfies these conditions such solution exists and some solution can be found by linear programming.  Define $u = \sum_{i=1}^{n} y_i v^i$ where $y_i$ are independent uniform variables in $[-1,1]$. 

\begin{lemma}
The vector $u$ has the following two properties:
\begin{enumerate}
    \item $\Pr[\min_i |u_i| \leq \frac{1}{4 n}] \leq \frac{1}{2}$

    \item $\Pr[\max_i |u_i| \geq 2 \sqrt{n \ln{n}}] \leq \frac{2}{n}$
\end{enumerate}
\end{lemma}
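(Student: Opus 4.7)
The plan is to prove the two tail bounds independently and then combine each with a union bound over the $n$ coordinates.

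For the first item, I would fix an index $i$ and condition on the values of $\{y_j : j \neq i\}$. Since $v^i_i = 1/2$, we can write
\[
u_i = \tfrac{1}{2} y_i + \sum_{j \neq i} y_j v^j_i = \tfrac{1}{2} y_i + C_i,
\]
where $C_i$ is a constant determined by the conditioning. As $y_i$ is uniform on $[-1,1]$, the conditional distribution of $u_i$ is uniform on an interval of length $1$. Hence the conditional probability that $|u_i| \leq \frac{1}{4n}$ is at most $\frac{1}{2n}$, and therefore the unconditional probability is as well. A union bound over the $n$ coordinates yields $\Pr[\min_i |u_i| \leq \frac{1}{4n}] \leq \frac{1}{2}$.

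For the second item, I would apply Hoeffding's inequality to each $u_i$. Writing $u_i = \sum_{j=1}^n y_j v^j_i$ as a sum of independent mean-zero variables, each term $y_j v^j_i$ lies in an interval of length $2|v^j_i| \leq 2$. Thus
\[
\Pr[|u_i| \geq t] \;\leq\; 2 \exp\!\left( -\frac{2 t^2}{\sum_{j=1}^n 4 (v^j_i)^2} \right) \;\leq\; 2 \exp\!\left( -\frac{t^2}{2n} \right).
\]
Setting $t = 2\sqrt{n \ln n}$ gives $\Pr[|u_i| \geq t] \leq 2/n^2$, and union bounding over $i \in [n]$ yields $\Pr[\max_i |u_i| \geq 2\sqrt{n \ln n}] \leq 2/n$.

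Neither step looks genuinely hard: the main idea in the first part is to exploit the structural guarantee $v^i_i = 1/2$, which forces $u_i$ to have a sufficiently spread-out conditional distribution, and the main idea in the second part is that the uniform bound $\|v^j\|_\infty \leq 1$ gives enough control to apply a standard concentration inequality. The only subtlety worth being careful about is that the two items are proved as separate statements (not joint), so one does not need the events to happen simultaneously inside this lemma; however, in the intended use one would want both to hold at once, and this is automatic by a union bound giving overall probability at least $\tfrac{1}{2} - \tfrac{2}{n}$ of a good draw.
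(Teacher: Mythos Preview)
Your proof is correct and matches the paper's approach essentially line for line. For part~1 you phrase the density bound as a conditioning argument (which is perhaps slightly cleaner), and for part~2 you invoke Hoeffding's inequality explicitly where the paper cites the same bound as a ``standard Chernoff bound'' from Alon--Spencer; the resulting inequalities and the use of the union bound are identical.
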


\begin{proof}
We prove the two bounds separately. 
\begin{enumerate}
\item We first show that $\Pr[|u_i| \leq \frac{1}{4n}] \leq \frac{1}{2n}$. A union bound, summing over all $i = 1, \ldots, n$, then implies that $\Pr[\min_i |u_i| \leq \frac{1}{4n}] \leq \frac{1}{2}$.

Observe that $u_i = \sum_j y_j v_i^j$, where $v^i_i = \frac{1}{2}$, $|v_i^j| \leq 1$ and $y_j$ is distributed uniformly at random in $[-1, 1]$. Suppose we sample $y_i$ last, and that the sum $\sum_{j \neq i} y_j v_i^j$ has evaluated to $\alpha$. Conditional on this fact, $u_i$ is uniformly distributed in $[\alpha - \frac{1}{2}, \alpha + \frac{1}{2}]$. The probability that $|u_i| \leq \frac{1}{4n}$ is then given by the length of the intersection of the interval $[\alpha - \frac{1}{2}, \alpha + \frac{1}{2}]$ with the interval $[- \frac{1}{4n}, \frac{1}{4n}]$ --- which is at most $\frac{1}{2n}$, as required.

\item This follows from standard Chernoff bounds as each coordinate of $u$ is the sum of $n$ independent random variables with mean $0$ and absolute value at most $1$.  The probability that such a variable takes the value at least $t$ is at most $\exp (-t^2/2n)$.  For a proof of this well known fact see appendix A of \cite{alonspencer}.  Applying this with $t=2 \sqrt{n \ln{n}}$ and using the union bound proves this part of the lemma. (The factor of 2 comes from the fact that we need to bound the \emph{absolute value} of $u_i$, not just $u_i$ itself.)\qedhere
\end{enumerate}
\end{proof}

Assuming that $n \geq 8$, with probability at least $\frac 14$, we can apply \Cref{lemma:unbal} with $M=2 \sqrt{n \ln{n}}$ and
$m= \frac{1}{4 n}$ and we conclude.

\begin{theorem}
Using the system of linear equations over the
rational numbers we can find, with probability at least $\frac{1}{4}$ and in polynomial time, an LO colouring with  $1.5 \log_2 n + O(\log \log n)$ colours.
\end{theorem}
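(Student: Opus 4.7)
The plan is to directly combine the two preceding lemmas, together with the unbalanced colouring lemma \Cref{lemma:unbal}. First I would argue that the vectors $\{v^i\}_{i=1}^{n}$ are computable in polynomial time: each $v^i$ is a solution to the linear system $Av = 0^m$ subject to the affine constraint $v^i_i = 1/2$ and the box constraints $|v^i_j| \leq 1$ for all $j$. Feasibility is guaranteed by the LO 2-colourability assumption (via the vector $w$ with coordinates in $\{-1/3, 2/3\}$, suitably scaled), and a feasible solution can be produced by $n$ invocations of a polynomial-time linear programming algorithm.

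Next, sample $y_1, \ldots, y_n$ independently and uniformly from $[-1,1]$ and form $u = \sum_{i=1}^{n} y_i v^i$. Applying the union bound to the two bad events in the previous lemma, both $\min_i |u_i| > \tfrac{1}{4n}$ and $\max_i |u_i| < 2\sqrt{n \ln n}$ hold simultaneously with probability at least $1 - \tfrac{1}{2} - \tfrac{2}{n}$, which is at least $\tfrac{1}{4}$ once $n \geq 8$. (Edge cases with smaller $n$ can be handled by brute force and absorbed into the $O(\log \log n)$ slack.)

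Now I would feed $u$ into \Cref{lemma:unbal} with $M = 2\sqrt{n \ln n}$ and $m = \tfrac{1}{4n}$. The resulting number of colours is
\[
2 + \log_2\!\bigl(M/m\bigr) \;=\; 2 + \log_2\!\bigl(8\,n^{3/2}\sqrt{\ln n}\bigr) \;=\; 5 + \tfrac{3}{2}\log_2 n + \tfrac{1}{2}\log_2 \ln n,
\]
which is $1.5 \log_2 n + O(\log \log n)$. Since all the ingredients run in polynomial time and succeed jointly with probability at least $1/4$, this proves the theorem. There is no genuine obstacle here: the work has all been done in the preceding two lemmas, and the proof is essentially a bookkeeping of constants and an application of the union bound. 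The only mild subtlety is verifying that the $M/m$ ratio indeed gives the claimed leading constant $1.5$ in front of $\log_2 n$, which comes from combining the $n^{1/2}$ factor from the maximum with the $n^{-1}$ factor from the minimum to yield $n^{3/2}$.
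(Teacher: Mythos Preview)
Your proposal is correct and follows essentially the same approach as the paper: combine the two preceding lemmas via a union bound (valid for $n\geq 8$) and plug $M=2\sqrt{n\ln n}$, $m=\tfrac{1}{4n}$ into \Cref{lemma:unbal}. The paper states this conclusion in a single sentence without spelling out the arithmetic, whereas you have helpfully made the $1.5\log_2 n + O(\log\log n)$ computation explicit.
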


This algorithm is less efficient compared to the algorithm of the previous section.  The main
computational cost is solving linear programs which is more complicated than solving linear systems of equations.  Our bound for the number of colours is
also worse.  It might be possible to find a random solution to the linear system in a different way but if an average coordinate has value $\Theta (1)$ then, heuristically, it seems likely that some of the $n$ coordinates would be at distance $O(1/n)$ from zero resulting in ratio $\Theta (n)$, between the highest and lowest absolute value.  Thus it seems unlikely that the algorithm based on the rational numbers would beat the algorithm described in the previous section. We suspect that it is possible to derandomise  also this algorithm by conditional expectations, but as this would be complicated let us ignore this possibility.

Nevertheless, we include this algorithm using $\mathbb{Q}$ because it is, in some sense, complementary to the first one. While the first one colours ``bottom-up'' (i.e.~always sets the minimal colour again and again), the algorithm using $\mathbb{Q}$ colours ``top-down'' (i.e.~sets the maximal colour again and again). It is interesting that the algorithm from~\cite{NZ23:toct} combined these two approaches, whereas these algorithms stick to only one approach each and get exponentially better results. The improvement seems to come from using the power of random solutions to the linear system $A$, whether solved over $\mathbb{Z}_2$ or $\mathbb{Q}$.

As a final observation in this section let us note that defining
a colouring by the sign of the vector $u$ we get a standard
(non-monochromatic) 2-colouring of the hypergraph.  This gives an alternative
algorithm to that of~\cite{BG21:sicomp,BG19}.

\section{Concluding remarks}

Our algorithms indicate that LO 2-colouring is quite different
from many other colouring problems.  The key property that we
use in our algorithm is that the constraint implies a linear
constraint.   The analysis of the algorithms also heavily
relies on the fact that we study 3-uniform hypergraphs.

It is tempting to think that the proposed methods would extend to
other constraint satisfaction problems where we are guaranteed
that a solution must satisfy a linear constraint.  We have so far been
unable to find an interesting such example.

\paragraph{Acknowledgement.} This paper is a merger of independent work by H{\aa}stad and Martinsson, and by Nakajima and {\v{Z}}ivn{\'{y}} respectively.
We are grateful to Venkat Guruswami for noting and informing the authors of the fact that we independently had found the same algorithm.
We thank  D\"om\"ot\"or P\'alv\"olgyi
for informing us that LO colourings have been studied under the name of \emph{unique-maximum} colourings.

{\small
\bibliographystyle{alphaurl}
\bibliography{bib}
}

\end{document}